\providecommand{\norm}[1]{\lVert#1\rVert}
\newtheorem{theorem}{Theorem}
\newtheorem{assumption}{Assumption}
\newtheorem{definition}{Definition}%
\newtheorem{lemma}{Lemma}%
\newcommand{\red}{\textcolor{black}}
\title{\LARGE \bf
Modified projected Gauss-Newton method for constrained nonlinear least-squares: application to power flow analysis
}
\author{Yassine Nabou$^{1}$, Lucian Toma$^{2}$ and Ion Necoara$^{1,3}$
\thanks{$^{1}$Automatic Control and Systems Engineering Department, University Politehnica Bucharest, 060042 Bucharest, Romania. 
        {\tt\small yassine.nabou@stud.acs.upb.ro; ion.necoara@upb.ro}}%
\thanks{$^{2}$Electrical Power Systems Department, University Politehnica Bucharest, 060042 Bucharest, Romania. 
        {\tt\small  lucian.toma@upb.ro}. }%
\thanks{$^{3}$Gheorghe Mihoc-Caius Iacob Institute of Mathematical Statistics and Applied Mathematics of the Romanian Academy, 050711 Bucharest, Romania.} 
        }
\begin{document}

\maketitle
\thispagestyle{empty}
\pagestyle{empty}

\begin{abstract}
In this paper, we consider a modified projected Gauss-Newton method for solving constrained nonlinear least-squares problems. We assume that the functional constraints are smooth and the the other constraints are represented by a simple closed convex set.  We formulate the nonlinear least-squares problem as an optimization problem using the Euclidean norm as a merit function. In our method, at each iteration we linearize the functional constraints inside the merit function at the current point and add a quadratic regularization, yielding a strongly convex subproblem that is easy to solve, whose solution is the next iterate.     We present global convergence guarantees for the proposed method under mild assumptions. In particular, we prove stationary point convergence guarantees  and under Kurdyka-Lojasiewicz (KL) property for the objective function we derive convergence rates depending on the KL parameter. Finally, we show the efficiency of this method on the power flow analysis problem using several IEEE bus test cases.
\end{abstract}


\section{INTRODUCTION}
\noindent In many areas of engineering, such as maximum likelihood estimations, non-linear data fitting, parameter estimation or power flow analysis, one finds applications that can be recast as  nonlinear least-squares problems of the form \cite{JeRo:69,Har:61,CoWeZh:20}: 
\begin{align}\label{eq:non_lin_pb}
&\min \|F(x)\|\\
&s.t.\; x\in\mathbf{C}\subseteq \mathbb{R}^n,\nonumber 
\end{align}
where $\mathbf{C}$ is a closed convex set and $F = (F_1,\cdots,F_m)$, $F_i:\mathbb{R}^n\to \mathbb{R}$ for $i=1:m$, are nonlinear differentiable functions. When $\mathbf{C} = \mathbb{R}^n$ and $m=n$,  problem \eqref{eq:non_lin_pb} is equivalent to a squared system of nonlinear equations. Hence several algorithms were proposed for solving this problem, among these algorithms  the most popular is Newton-Raphson method (NR) \cite{Sto:74}. In Newton-Raphson method one uses the inverse of the Jacobian matrix in order to update the iterations, i.e., the iterations are of the following form:
\begin{align*}
x^{+} = x - \nabla F(x)^{-1}F(x),
\end{align*} 
where $x$ is the current iteration and $\nabla F(x)$ is the Jacobian matrix of $F(x)$. Although NR has  fast convergence, it has several  drawbacks. First of all, it can happen that at current test point the Jacobian is degenerate; in this case the method is not well-defined. Secondly, this convergence is not guaranteed when the initial point $x_0$ is far from the optimum \cite{Mil:09}. Many approaches have been proposed in order to deal with these challenges, e.g., improving the starting point \cite{BrCaMu:20}, or using different approximations for the Jacobian \cite{ChSh:06,AbCuFl:14}. In \cite{Nes:07}, Nesterov proposed a modified Gauss-Newton scheme (M-GN) for solving unconstrained nonlinear least-squares problems. The M-GN method constructs a convex model by linearizing the nonlinear function $F$ inside a sharp merit function and adding a quadratic regularization term, i.e.:
\begin{align*}
x^{+} = \arg\min\limits_{y \in\mathbb{R}^n} \|F(x) + \nabla F(x)(y - x)\| + \frac{M}{2}\|y - x\|^2.
\end{align*}
When $M = 0$, we recover the NR method described above. In  \cite{Nes:07} it was  proved that, under a nondegenerate assumption (i.e., $\sigma_{\text{min}}(\nabla F(x)) >0$ for all $x$ in the level set of $\|F(x_0)\|$, where $x_0$ is the starting point and $\sigma_{\text{min}}$ denotes the smallest singular value), this scheme has global convergence. Moreover, the solution  of each subproblem can be computed with a standard convex optimization solver. Further, problem \eqref{eq:non_lin_pb} is equivalent to the following composite optimization problem:
\begin{align}\label{eq:op-form1}
\min\limits_{x\in\mathbb{R}^n} \| F(x) \|^2  + \red{I}_{\mathbf{C}}(x),
\end{align}
\red{where $I_{\mathbf{C}}$ is the indicator function of the convex set $\mathbf{C}$}. Note that using only the norm $\|\cdot\|$ as the merit function is beneficial than using $\|\cdot\|^2$, since in the latest case the condition number is doubled. Another possible algorithm for solving this problem is the Projected Gradient Descent (PGD) \cite{Nes:13,HaBoH:16,SaBrAi1:90,SaBrAi2:90}. The standard PGD algorithm is given by:
\begin{align*}
x^{+} = \Pi_{\mathbf{C}}\left(x - \alpha \nabla F(x) F(x)\right),
\end{align*}
where $\Pi_{\mathbf{C}}$ is the projection operator \red{(see Section \ref{sec:N_P})}  and $\alpha$ is a step size. PGD descent is a simple method easy to implement, but the main drawback is that it  has  slow convergence.
\medskip

\noindent A natural questions arises whether  we can prove global convergence of MG-N method without assuming the nondegeneracy assumption on the Jacobian $\nabla F(x)$ , i.e., without assuming  $\sigma_{\text{min}}(\nabla F(x)) >0 $ for all $x$ in the level set of $\|F(x_0)\|$ (see \eqref{eq:levset}). Such a condition is conservative and it \red{is not} always satisfied in practice. In this paper we answer positively to this question, i.e., we consider a  Modified Projected Gauss-Newton method (MPG-N) for solving problem \eqref{eq:non_lin_pb}, where $\mathbf{C}$ is a simple closed  convex set. At each iteration, MPG-N aims to solve the following strongly convex subproblem:
\begin{align}\label{eq:op-prb}
x_{k+1} \!=\! \arg\min\limits_{x\in\mathbf{C}} \|F(x_k) \!+\! \nabla F(x_k)(x \!-\! x_k)\| \!+\! \frac{M}{2}\|x - x_k\|^2,
\end{align} 
which is a slightly modified version of \red{\cite{Nes:07}} as it considers constraints $x\in\mathbf{C}$. We prove, under mild assumptions, that this scheme can achieve global convergence  without any assumption on the Jacobian matrix. More precisely, we prove that any limit point of the sequence generated by MPG-N is a stationary point and under the  Kurdyka-Lojasiewicz (KL) property, we derive convergence rates in function value depending on the KL parameter. Finally, we consider solving a  power flow analysis problem, with functional constraints which  \red{do not} usually satisfy the non-degenerate assumption, while it satisfies the KL property. We compare the performance of such a scheme with the projected gradient scheme and demonstrate its efficiency of the proposed method on several  IEEE bus test cases.\\

\noindent \textit{Content}. The rest of the paper is organized as follows: Section \ref{sec:N_P} provides some notations and preliminaries, Section \ref{sec:ALG}  presents the new  algorithm and the convergence results, Section \ref{sec:PFA_SIM} describes the  power flow analysis problem  and  numerical results on several  IEEE bus test cases.


\section{Notations and preliminaries}\label{sec:N_P}
\noindent We denote a finite-dimensional real vector space with $\mathbb{E}$ and by $\mathbb{E}^{*}$ its dual space composed of linear functions on $\mathbb{E}$. Using a self-adjoint positive-definite operator $D:\mathbb{E}\rightarrow \mathbb{E}^{*}$(notation $D=D^{*}\succ 0$), we can endow these spaces with conjugate Euclidean norms:
\begin{align*}
\norm{x}=\langle Dx,x\rangle\red{^{\frac{1}{2}}},\quad x\in \mathbb{E},\qquad \norm{g}_{*}=\langle g,D^{-1}g\rangle^{\frac{1}{2}},\quad g\in \mathbb{E}^{*}.
\end{align*}
For simplicity, we consider in the following $\mathbb{E} = \mathbb{R}^n$ and $D$ is the identity matrix.
Let $F = (F_1,\cdots,F_m)$, where $F_i$'s, $i=1:m$, are differentiable functions and the Jacobian is Lipschitz continuous, i.e.:
\begin{align*}
\|\nabla F(x) - \nabla F(y)\|\leq L_{F} \|x - y\| \;\;  \forall x,y\in\mathbb{R}^n. 
\end{align*}
It follows that \cite{Nes:07}:
\begin{align}\label{eq:lipsh_descn}
\|F(x)\| \!\!-\!\! \|F(y) \!+\! \nabla F(y) (x \!\!-\!\! y)\|\leq \frac{L_F}{2}\|x \!\!-\!\! y\|^2 \;\; \forall x,y\!\in\!\mathbb{R}^n.
\end{align}
Let $h$ be proper lower semicontinuous function and $\mu>0$. Then, the proximal operator with respect to $h$ is:
\begin{align*}
\text{prox}_{\mu h}(x) = \arg\min_{y} h(y) + \frac{\mu}{2}\| y - x \|^2,  
\end{align*}
and the Moreau envelop is defined as:
\begin{align*}
h_{\mu}(x) = \min_{y}\; h(y) + \frac{\mu}{2}\|y - x\|^2. 
\end{align*}
When $h$ is the indicator function of a convex set $C$, $I_{C}$, then the proximal operator is the projection:
\begin{align*}
\text{prox}_{\mu \red{I}_{\mathbf{C}}}(x) & = \Pi_{\textbf{C}}(x) = \arg\min_{y\in C} \|y - x\|^2.
\end{align*}
We say that $h$ is $\mu$-weakly convex if the function $$x \mapsto h(x) + \frac{\mu}{2}\|x\|^2 $$ is convex. The level set of $h$ at $x_0$ is defined:
\begin{align}\label{eq:levset}
\mathcal{L}(h(x_0)): = \lbrace x\in\mathbb{R}^n: h(x)\leq h(x_0) \rbrace.    
\end{align}
Next, we provide few definitions and properties concerning subdifferential calculs (see also \cite{Mor:06,Rock:98}).\\

\begin{definition}
\textit{(Subdifferential)}: Let $f: \mathbb{R}^n \to \bar{\mathbb{R}}$ be a proper lower semicontinuous function. For a given $x \in \text{dom} \; f$, the Frechet subdifferential of $f$ at $x$, written $\widehat{\partial}f(x)$, is the set of all vectors $g_{x}\in\mathbb{R}^{n}$ satisfying:
\begin{equation*}
\lim\limits_{x\neq y,y\to x}\frac{f(y) - f(x) - \langle g_{x}, y - x\rangle}{\norm{x-y}}\geq 0.
\end{equation*}
When $x \notin\text{dom} \; f$, we set $\widehat{\partial} f(x) = \emptyset$. The limiting-subdifferential, or simply the subdifferential, of $f$ at $x\in \text{dom} \, f$, written $\partial f(x)$, is defined through the following closure process \cite{Mor:06}:
\begin{align*}
\partial f(x):= &\left\{ g_{x}\in \mathbb{E}^{*}\!\!: \exists x^{k}\to x \;\text{with}\; f(x^{k})\to f(x) \;\right.\\
 &\quad \left. \text{and} \; \exists g_{x}^{k}\in\widehat{\partial} f(x^{k}) \;\; \text{with} \;\;  g_{x}^{k} \to g_{x}\right\}. 
\end{align*} 
\end{definition}

\noindent Note that we have $\widehat{\partial}f(x)\subseteq\partial f(x)$  for each $x\in\text{dom}\,f$. In the previous inclusion, the first set is closed and convex while the second one is closed, see e.g., \cite{Rock:98}(Theorem~8.6).   
\noindent For any $x\in \text{dom} \; f$ let us define:
\begin{align*}
\red{S_f(x)} = \text{dist}\big(0,\partial f(x)\big):=\inf\limits_{g_{x}\in\partial f(x)}\norm{g_{x}}.
\end{align*}
\noindent If $\partial f(x) = \emptyset$, we set $S_f(x) = \infty$. 
Let us also recall the definition of a function satisfying the \textit{Kurdyka-Lojasiewicz (KL)} property (see \cite{BolDan:07} for more details).

\medskip 

\begin{definition}
\label{def:kl}
\noindent A proper lower semicontinuous  function $f: \mathbb{R}^n\rightarrow \bar{\mathbb{R}}$ satisfies  \textit{Kurdyka-Lojasiewicz (KL)} property on the compact set $\Omega \subseteq \text{dom} \; f$ on which $f$ takes a constant value $f_*$ if there exist $\delta, \epsilon, q >0$ such that   one has:
\begin{align}\label{eq:kl}
&f(x) - f_*  \leq \sigma_q\; \red{S_f(x)^q} \\ \nonumber
&  \forall x\!: \;  \text{dist}(x, \Omega) \leq \delta, \; f_* < f(x) < f_* + \epsilon.  
\end{align}
\end{definition}  

\medskip

\noindent  Note that the relevant aspect of the KL property is when $\Omega$ is a subset of critical points for $f$, i.e.  $\Omega \subseteq \{x: 0 \in \partial f(x) \}$, since it is easy to establish the KL property when $\Omega$ is not related to critical points. The KL property holds for a large class of functions including semi-algebraic functions (e.g., real polynomial functions), vector or matrix (semi)norms (e.g., $\|\cdot\|_p$ with $p \geq 0$ rational number), trigonometric functions, logarithm functions,  exponential functions and  uniformly convex functions,  see \cite{BolDan:07} for a comprehensive list. \\


\section{Modified Projected Gauss-Newton method}\label{sec:ALG}
\noindent In this section, we present the Modified Projected Gauss-Newton  (MPG-N) method and then derive convergence results. We recall the problem of our interest is:
\begin{align}\label{eq:op_prb_ours}
\min\limits_{x\in\mathbb{R}^n} f(x) : = \|F(x)\| + I_{\mathbf{C}}(x).
\end{align}
We consider the following assumption:
\begin{assumption}\label{eq:ass1}
\begin{enumerate}
\item $F$ is differentiable and the Jacobian is Lipschitz continuous:
\begin{align*}
\| \nabla F(x) - \nabla F(y) \|\leq L_F\|x - y\|, \quad \forall x,y\in\mathbf{C}.
\end{align*}
\item Problem \eqref{eq:op-prb} has solution, i.e., there exist $x^* \in\mathbf{C}$ such that $f(x^*) >  -\infty$.
\end{enumerate}
\end{assumption}
An immediate consequence of \eqref{eq:lipsh_descn} is:
\begin{align*}
f(x)\leq \|F(y) + \nabla F(y)(x-y)\| + \frac{L_F}{2}\| x-y \|^2\quad \forall x,y \in\mathbf{C}.
\end{align*}
Then, for $M > 0$, we define the modified projected Gauss-Newton iterate at a point $x\in\mathbf{C}$ as follows:
\begin{align}\label{eq:iter}
T_M (x) & = \arg\min_{y\in\mathbf{C}} \Psi_{M}(y;x)\\
        &:= \arg\min_{y\in\mathbf{C}} \|F(x) + \nabla F(x)(y-x)\| + \frac{M}{2}\| y-x \|^2.\nonumber
\end{align}
Note that this subproblem is strongly convex, hence $T_M (x)$ is well defined and unique. Finally, the modified projected Gauss-Newton algorithm is as follows: 
\begin{center}
\noindent\fbox{%
\parbox{8.4cm}{%
\textbf{MPG-N algorithm}\\
Chose $x_{0}\in\mathbf{C}$ and  $\red{L_0,\delta >0}$. For $k\geq 0$ do:\\
Find $L_0 \leq M_k \leq 2L_F$ such that:
\begin{align}\label{eq:alg_desc}
 \frac{\delta}{2}\| T_{M_k}(x_k) - x_k\|^2\leq  \Psi_{M_k}\left(T_{M_k}(x_k);x_k\right) \!-\! f\left(T_{M_k}(x_k)\right)
\end{align}
Update  $x_{k+1} = T_{M_k}(x_k)$.

%
}
}
\end{center}

\medskip 

\noindent The first step of MPG-N algorithm consists of finding a constant $M_k >0$ such that inequality \eqref{eq:alg_desc} holds. If the constant $L_F$ is known, we can take $M_k = L_F + \delta$. Otherwise, we can apply the following line search procedure \cite{NesPol:06}:
\begin{align*}
    &\textbf{While}\; \eqref{eq:alg_desc}\; \text{is not satisfied}\; \textbf{do}\;\; M_k = 2M_k \;\\
    &M_{k+1} = \text{max}\left(\frac{M_k}{2} , L_0\right).
\end{align*}

\noindent The next lemma shows that this process is well defined.
\begin{lemma}\label{eq:lem1}
Let Assumption \ref{eq:ass1} hold. At $k${th} iteration of MPG-N algorithm, if $M_k - L_F \geq \delta$, then inequality \eqref{eq:alg_desc} holds.
\end{lemma}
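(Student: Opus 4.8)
The plan is to unwind the definitions and apply the quadratic upper bound \eqref{eq:lipsh_descn} with a well-chosen pair of points. Write $y_k := T_{M_k}(x_k)$, which lies in $\mathbf{C}$ by the definition \eqref{eq:iter} of the iterate as a minimizer over $\mathbf{C}$. Since $\mathbf{C}$ is convex, the whole segment between $x_k$ and $y_k$ stays in $\mathbf{C}$, so the Lipschitz-Jacobian hypothesis in Assumption \ref{eq:ass1} is in force along that segment and the descent-type inequality \eqref{eq:lipsh_descn} applies with $x = y_k$ and $y = x_k$, giving
\begin{align*}
\|F(y_k)\| \leq \|F(x_k) + \nabla F(x_k)(y_k - x_k)\| + \frac{L_F}{2}\|y_k - x_k\|^2.
\end{align*}

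Next I would observe that because $y_k \in \mathbf{C}$ we have $I_{\mathbf{C}}(y_k) = 0$, hence $f(y_k) = \|F(y_k)\|$; so the displayed bound reads $f(y_k) \leq \|F(x_k) + \nabla F(x_k)(y_k - x_k)\| + \frac{L_F}{2}\|y_k - x_k\|^2$. On the other hand, directly from the definition of $\Psi_{M_k}$ in \eqref{eq:iter},
\begin{align*}
\Psi_{M_k}(y_k;x_k) = \|F(x_k) + \nabla F(x_k)(y_k - x_k)\| + \frac{M_k}{2}\|y_k - x_k\|^2.
\end{align*}
Subtracting the previous inequality from this identity, the (possibly nonsmooth) linearized-residual term cancels and I am left with
\begin{align*}
\Psi_{M_k}(y_k;x_k) - f(y_k) \geq \frac{M_k - L_F}{2}\|y_k - x_k\|^2.
\end{align*}

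Finally, invoking the hypothesis $M_k - L_F \geq \delta$ turns this into $\Psi_{M_k}(y_k;x_k) - f(y_k) \geq \frac{\delta}{2}\|y_k - x_k\|^2$, which is exactly \eqref{eq:alg_desc}, completing the argument. There is essentially no hard step here: the proof is a two-line computation once one has \eqref{eq:lipsh_descn} in hand. The only point that deserves a word of care is justifying the use of \eqref{eq:lipsh_descn}: it is stated in the preliminaries for all $x,y \in \mathbb{R}^n$ but under Assumption \ref{eq:ass1} the Lipschitz bound on $\nabla F$ is only assumed on $\mathbf{C}$, so one should note that convexity of $\mathbf{C}$ together with $x_k, y_k \in \mathbf{C}$ keeps the integration path inside $\mathbf{C}$, which is where the bound \eqref{eq:lipsh_descn} is then valid.
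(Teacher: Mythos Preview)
Your argument is correct and is essentially identical to the paper's own proof: apply the descent bound \eqref{eq:lipsh_descn} at the pair $(T_{M_k}(x_k),x_k)$, add $\frac{M_k}{2}\|T_{M_k}(x_k)-x_k\|^2$ to both sides to recognize $\Psi_{M_k}$, and invoke $M_k - L_F \geq \delta$. The only addition you make is the (welcome) remark that convexity of $\mathbf{C}$ keeps the segment inside $\mathbf{C}$ so that the Lipschitz hypothesis of Assumption~\ref{eq:ass1} applies.
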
 
\begin{proof}
We have from inequality \eqref{eq:lipsh_descn} that:
\begin{align*}
&\frac{M_k - L_F}{2}\|T_{M_k}(x_k) - x_k\|^2 + f(T_{M_k}(x_k))\\
&\leq \|F(x_k) + \nabla F(x_k)(T_{M_k}(x_k) - x_k)\|  + \frac{M_k}{2}\|T_{M_k}(x_k)- x_k\|^2\\
&= \Psi_{M_k}\left(T_{M_k}(x_k);x_k\right).
\end{align*}
Since  $M_k - L_F \geq \delta$, it follows immediately that:
 \begin{align*}
 \frac{\delta}{2}\| T_{M_k}(x_k) - x_k\|^2\leq  \Psi_{M_k}\left(T_{M_k}(x_k);x_k\right) - f(T_{M_k}(x_k)) .
\end{align*}
Hence, this is the statement of the lemma.
\end{proof}
Note that Lemma \ref{eq:lem1} ensures that \eqref{eq:alg_desc} always holds, provided that $M_k \geq L_f + \delta$. However,  
in practice, using the line search procedure allows us to work   with $M_k$ small (i.e.,  $M_k \leq L_F$) such that condition \eqref{eq:alg_desc} holds. 
Next, let us discuss the solution of the subproblem \eqref{eq:iter}. Following \cite{Nes:13}, we have:
\begin{align*}
&\min_{y\in \mathbf{C} } \|F(x) + \nabla F(x)(y-x)\| + \frac{M}{2}\| y-x \|^2\\
&= \min_{y\in \mathbf{C}} \max_{\|s\|\leq 1} \langle s,F(x) + \nabla F(x)(y-x)\rangle + \frac{M}{2}\| y-x \|^2\\
&=  \max_{\|s\|\leq 1} \min_{y\in \mathbf{C}} \langle s,F(x) + \nabla F(x)(y-x)\rangle + \frac{M}{2}\| y-x \|^2\\
&=  \max_{\|s\|\leq 1} \min_{y\in \mathbf{C}}  \langle \nabla F(x)^{T}s,(y-x)\rangle + \frac{M}{2}\| y-x \|^2 + \langle s,F(x) \rangle \\
&=  \max_{\|s\|\leq 1} \min_{y\in \mathbf{C}}  \frac{M}{2}\| y-x + \frac{1}{M}\nabla F(x)^{T}s \|^2 - \frac{1}{2M}\|F(x)^{T}s\|^2 \\
& \quad + \langle s,F(x) \rangle \\
&=  \max_{\|s\|\leq 1}  \frac{M}{2}\|  \Pi_{\textbf{C}}\left(x - \frac{1}{M}\nabla F(x)^{T}s\right)-x + \frac{1}{M}\nabla F(x)^{T}s \|^2  \\
& \quad - \frac{1}{2M}\|F(x)^{T}s\|^2 + \langle s,F(x) \rangle,
\end{align*}
which can be solved with standard convex optimization tools, such as trust-region methods \cite{CoGoTo:00}. 


\subsection{Convergence analysis}
\noindent In this section we derive convergence results for MPG-N algorithm. First, we  can prove the following descent:
\begin{lemma}
Let Assumption \ref{eq:ass1} hold. Let $(x_{k})_{k\geq 0}$ be generated by MPG-N algorithm. Then, we have:
\begin{enumerate}
\item  Sequence $(f(x_k))_{k\geq 0}$ is nonincreasing and satisfies: 
\begin{align}\label{eq:desc_fct}
\frac{\delta}{2}\| x_{k+1} - x_k\|^2\leq f(x_k) - f(x_{k+1}).
\end{align}
\item The sequence $(x_{k})_{k\geq 0}$ satisfies:
\begin{align*}
\sum_{k=1}^{\infty}\| x_{k+1} - x_k\|^2 <\infty,\quad \lim_{k\to \infty} \| x_{k+1} - x_k\|^2 = 0.
\end{align*}
\end{enumerate}
\end{lemma}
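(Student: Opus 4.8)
The plan is to derive both parts from the line-search condition \eqref{eq:alg_desc} together with the optimality of $T_{M_k}(x_k)$ in the subproblem \eqref{eq:iter}. First I would prove the descent inequality \eqref{eq:desc_fct}. The key observation is that since $x_{k+1} = T_{M_k}(x_k)$ minimizes $\Psi_{M_k}(\cdot\,;x_k)$ over $\mathbf{C}$ and $x_k\in\mathbf{C}$, we may evaluate the model at $y=x_k$ to get
\begin{align*}
\Psi_{M_k}(x_{k+1};x_k)\leq \Psi_{M_k}(x_k;x_k)=\|F(x_k)\|=f(x_k),
\end{align*}
using that the linearization term is exact at $y=x_k$, the quadratic term vanishes, and $I_{\mathbf{C}}(x_k)=0$. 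Combining this with \eqref{eq:alg_desc}, which reads $\tfrac{\delta}{2}\|x_{k+1}-x_k\|^2\leq \Psi_{M_k}(x_{k+1};x_k)-f(x_{k+1})$, yields
\begin{align*}
\frac{\delta}{2}\|x_{k+1}-x_k\|^2\leq f(x_k)-f(x_{k+1}),
\end{align*}
which is exactly \eqref{eq:desc_fct}. Nonincreasingness of $(f(x_k))_{k\geq 0}$ then follows since the left-hand side is nonnegative.

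For the second part I would sum \eqref{eq:desc_fct} over $k=1,\dots,N$, telescoping the right-hand side to obtain
\begin{align*}
\frac{\delta}{2}\sum_{k=1}^{N}\|x_{k+1}-x_k\|^2\leq f(x_1)-f(x_{N+1}).
\end{align*}
By Assumption \ref{eq:ass1}(2) the objective $f$ is bounded below (by $f(x^*)>-\infty$), so $f(x_{N+1})\geq f(x^*)$ for all $N$, and letting $N\to\infty$ gives $\sum_{k=1}^{\infty}\|x_{k+1}-x_k\|^2\leq \tfrac{2}{\delta}\big(f(x_1)-f(x^*)\big)<\infty$. Convergence of this series immediately forces its general term to zero, i.e.\ $\lim_{k\to\infty}\|x_{k+1}-x_k\|^2=0$.

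I do not expect a genuine obstacle here; the argument is short once the right quantities are compared. The only point that requires a little care is the first inequality above: one must use that $x_k$ is feasible for the subproblem so that it is a valid competitor, and that the merit term and the regularizer both behave correctly at $y=x_k$. Everything else is telescoping plus boundedness from below.
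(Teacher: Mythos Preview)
Your proposal is correct and follows essentially the same approach as the paper: compare the model value at $x_{k+1}$ with its value at the feasible competitor $x_k$ to bound $\Psi_{M_k}(x_{k+1};x_k)\leq f(x_k)$, combine with \eqref{eq:alg_desc} to get \eqref{eq:desc_fct}, then telescope and use the lower bound on $f$. The only cosmetic difference is that the paper telescopes starting from $k=0$ (obtaining $f(x_0)-f(x_N)$) rather than from $k=1$, which is immaterial.
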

\medskip 
\begin{proof}
We have:
\begin{align*}
\Psi_{M_k}\left(T_{M_k}(x_k);x_k\right)\leq \min\limits_{x\in\mathbf{C}}\Psi_{M_k}\left(x;x_k\right)\leq \Psi_{M_k}\left(x_k;x_k\right) = f(x_k).
\end{align*}
Then, combining this inequality with equation \eqref{eq:alg_desc} we get the first statement. Further, summing up the inequality \eqref{eq:alg_desc} and using that $f$ is bounded from below by $f^*$, we get:
\begin{align*}
\sum_{k=1}^{N}\frac{\delta}{2}\|x_{k+1} - x_k\|^2 \leq f(x_0) - f(x_{N})\leq  f(x_0) - f^*,
\end{align*}
and the second statement follows.
\end{proof}

\medskip 

\noindent In \cite{DruPaq:19,NabNec:22}, the authors prove that for the composite problem \eqref{eq:op-prb}, the quantity $\text{dist}(0,\partial f(x_{k+1}))$ \red{does not} always tend to zero in the limit, even if $\|x_{k+1} - x_k\|$ goes to zero. Thus, we must look elsewhere for a connection between $\text{dist}(0,\partial f(\cdot))$ and $\|x_{k+1} - x_k\|$. Let us start with the following observation, whose proof can be found in Lemma 4.2 \cite{DruPaq:19}: the function $f(x) := \|F(x)\| + \red{I}_{\mathbf{C}}(x)$ is $L_F$-weakly convex. Weak convexity of $f$ has an immediate consequence on the Moreau envelope, denoted  $f_\mu$:   
\medskip 
\begin{lemma}(Lemma 4.3 \cite{DruPaq:19})
Let  $ \mu > L_F $. Then, the proximal map $\text{prox}_{\mu f}$ is well-defined and single-valued. The Moreau envelope $f_\mu$ is smooth with gradient given by:
\begin{align*}
\nabla f_\mu (x) = \mu(x - \text{prox}_{\mu f}(x)).
\end{align*}
\end{lemma}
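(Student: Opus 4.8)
The plan is to invoke the standard theory of Moreau envelopes for weakly convex functions, applied to the specific function $f(x) = \|F(x)\| + I_{\mathbf{C}}(x)$, which was just established to be $L_F$-weakly convex. The key structural fact is that for a $\mu$-parameter strictly larger than the weak-convexity modulus $L_F$, the regularized function $y \mapsto f(y) + \frac{\mu}{2}\|y - x\|^2$ is strongly convex, so the minimization defining $\mathrm{prox}_{\mu f}(x)$ and $f_\mu(x)$ has a unique solution; this gives well-definedness and single-valuedness of $\mathrm{prox}_{\mu f}$ immediately.

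For the smoothness and gradient formula, I would proceed as follows. First, write $g(y) := f(y) + \frac{\mu}{2}\|y\|^2$, which is convex (by weak convexity with $\mu > L_F$, actually $\mu \geq L_F$ suffices for convexity, $\mu > L_F$ for strong convexity). Then rewrite $f_\mu(x) = \min_y g(y) - \mu \langle x, y \rangle + \frac{\mu}{2}\|x\|^2$, so that up to the additive term $\frac{\mu}{2}\|x\|^2$ and a scaling, $f_\mu$ is (the negative of) a partial conjugate of the convex function $g$. Conjugates of closed proper convex functions are closed and convex, and standard conjugate duality (e.g.\ Rockafellar, or Rockafellar--Wets Theorem 2.26) tells us that such a Moreau-type envelope of a convex function is continuously differentiable with gradient equal to $\mu$ times the displacement to the unique proximal point. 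Concretely, the first-order optimality condition for the inner minimization reads $0 \in \partial f(y^\star) + \mu(y^\star - x)$ where $y^\star = \mathrm{prox}_{\mu f}(x)$, and differentiating the value function (using that the minimizer is unique and depends continuously on $x$, a consequence of strong convexity and a standard argument, e.g.\ Danskin-type or via the nonexpansiveness of the proximal map of the convex shift) yields $\nabla f_\mu(x) = \mu(x - \mathrm{prox}_{\mu f}(x))$.

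The main obstacle, and the only part requiring genuine care, is justifying the differentiability of the value function $f_\mu$ rather than merely its directional or Clarke-subgradient properties; the weak (as opposed to full) convexity of $f$ is what makes this non-automatic, which is precisely why the restriction $\mu > L_F$ is essential. The cleanest route is to reduce entirely to the convex case by the shift trick above: since $g = f + \frac{\mu}{2}\|\cdot\|^2$ is closed proper convex, its Moreau envelope $g_{\mu'}$ for appropriate $\mu'$ is $C^1$ by the classical result, and one then unwinds the algebra to recover the formula for $\nabla f_\mu$. I would simply cite Lemma 4.3 of \cite{DruPaq:19} for the precise statement since the excerpt attributes it there, and present the shift-to-convex argument as the one-paragraph justification, flagging that uniqueness of $\mathrm{prox}_{\mu f}(x)$ plus continuity of $x \mapsto \mathrm{prox}_{\mu f}(x)$ (Lipschitz, in fact) are the technical inputs that upgrade a subgradient identity to a gradient identity.
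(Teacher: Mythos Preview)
Your proposal is correct, and in fact goes further than the paper: the paper does not prove this lemma at all but simply states it with the attribution ``(Lemma~4.3 \cite{DruPaq:19})'' and moves on. Your shift-to-convex argument (write $g=f+\tfrac{\mu}{2}\|\cdot\|^2$, observe it is strongly convex when $\mu>L_F$, then invoke the classical $C^1$-smoothness and gradient formula for Moreau envelopes of convex functions) is exactly the proof given in the cited reference, so there is nothing to correct. If you want to match the paper's treatment you can drop the argument entirely and just cite \cite{DruPaq:19}; if you prefer to keep the sketch, it is sound as written.
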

Further, we have the following lemma whose prove is similar to the proof of Lemma $5$  in \cite{NabNec:22}.

\medskip 
\begin{lemma}\label{th:2}
Let Assumption \ref{eq:ass1} holds. Let $(x_k)_{k\geq 0}$ be generated by MPG-N method and consider $y_{k+1} = \text{prox}_{\mu f}(x_k)$, where $\frac{1}{\mu}\in (0,\frac{1}{3L_F})$. Then, we have the following relations:
\begin{enumerate}
\item $\|y_{k+1} - x_{k}\|^2 \leq \frac{\mu}{\mu - 3 L_F} \|x_{k+1} - x_k\|^2$\\
\item $\text{dist}(0,\partial f(y_{k+1})) \leq \mu \|y_{k+1} - x_k\|$.
\end{enumerate}
\end{lemma}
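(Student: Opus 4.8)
The plan is to exploit the definition of $y_{k+1}$ as a proximal point together with the optimality conditions of the subproblem \eqref{eq:iter} that produces $x_{k+1}$, and then use weak convexity of $f$ to turn the resulting inequalities into the two claimed bounds. For the first item I would start from the two minimization characterizations in play: $x_{k+1}$ minimizes $\Psi_{M_k}(\cdot\,;x_k)$ over $\mathbf{C}$, and $y_{k+1}$ minimizes $f(\cdot)+\frac{\mu}{2}\|\cdot-x_k\|^2$ over $\mathbb{R}^n$. Comparing the value of the proximal objective at $y_{k+1}$ against its value at $x_{k+1}$, and using that $f(x_{k+1})\le\Psi_{M_k}(x_{k+1};x_k)$ (which holds by \eqref{eq:lipsh_descn} since $M_k\le 2L_F$, actually more simply because $\Psi$ linearizes and regularizes so that \eqref{eq:lipsh_descn} gives $f\le\Psi_{M_k}$ whenever $M_k\ge L_F$; if $M_k<L_F$ one argues through $\Psi_{2L_F}$), one gets
\begin{align*}
f(y_{k+1}) + \frac{\mu}{2}\|y_{k+1}-x_k\|^2 \le \Psi_{M_k}(x_{k+1};x_k) + \frac{\mu}{2}\|x_{k+1}-x_k\|^2.
\end{align*}
Then bound $\Psi_{M_k}(x_{k+1};x_k)$ above using \eqref{eq:lipsh_descn} once more by $f(x_{k+1})+\tfrac{L_F}{2}\|x_{k+1}-x_k\|^2$ plus the regularization already present, and bound $f(y_{k+1})$ below using $L_F$-weak convexity of $f$: $f(x_{k+1})\ge f(y_{k+1})+\langle g, x_{k+1}-y_{k+1}\rangle-\tfrac{L_F}{2}\|x_{k+1}-y_{k+1}\|^2$ for $g\in\partial f(y_{k+1})$, where $g=\mu(x_k-y_{k+1})$ by the proximal optimality condition. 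Assembling these, the $f(x_{k+1})$ terms cancel, the inner product $\langle \mu(x_k-y_{k+1}),x_{k+1}-y_{k+1}\rangle$ can be re-expressed via the polarization identity, and after collecting the quadratic terms in $\|y_{k+1}-x_k\|^2$, $\|x_{k+1}-x_k\|^2$, $\|x_{k+1}-y_{k+1}\|^2$ one should land on an inequality of the form $(\mu-3L_F)\|y_{k+1}-x_k\|^2 \le \mu\|x_{k+1}-x_k\|^2$ (the various $L_F$'s — one from the $\Psi$ upper bound, one from weak convexity, and one from $M_k\le 2L_F$ — are what produce the factor $3$), which is exactly item 1 once $\mu>3L_F$.

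For the second item I would appeal to the standard fact recalled just before the lemma: for $\mu>L_F$, $\nabla f_\mu(x_k)=\mu(x_k-\mathrm{prox}_{\mu f}(x_k))=\mu(x_k-y_{k+1})$, and simultaneously $\mu(x_k-y_{k+1})\in\partial f(y_{k+1})$ (the subdifferential inclusion at the proximal point). Hence $\mathrm{dist}(0,\partial f(y_{k+1}))\le\|\mu(x_k-y_{k+1})\|=\mu\|y_{k+1}-x_k\|$, which is item 2 directly — no computation needed beyond invoking the optimality condition of the prox.

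The main obstacle is item 1: getting the constant right. The delicate bookkeeping is making sure every place where $L_F$ enters (the descent lemma \eqref{eq:lipsh_descn} applied to upper-bound $\Psi_{M_k}(x_{k+1};x_k)$ by $f$ at $x_{k+1}$ plus curvature, the $L_F$-weak-convexity lower bound for $f(y_{k+1})$ relative to $f(x_{k+1})$, and the cross term handling via Young/polarization) is accounted for, and that the cancellation of the $f(x_{k+1})$ terms is legitimate — i.e. that one really can sandwich $\Psi_{M_k}(x_{k+1};x_k)$ between $f(x_{k+1})$-based quantities using only $M_k\le 2L_F$. I would also double-check that the argument does not secretly need $x_{k+1}$ to be the exact prox of anything; it only uses that $x_{k+1}$ is the exact minimizer of $\Psi_{M_k}(\cdot;x_k)$ over $\mathbf{C}$, so that $\Psi_{M_k}(x_{k+1};x_k)\le\Psi_{M_k}(y_{k+1};x_k)$ is available — this is the inequality I would actually start the chain with, comparing at the competitor point $y_{k+1}$ on one side and using the prox optimality of $y_{k+1}$ on the other. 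Since the paper explicitly says the proof is analogous to Lemma 5 in \cite{NabNec:22}, I expect this constant-chasing to go through exactly as there, with $\frac{1}{\mu}\in(0,\frac{1}{3L_F})$ being precisely the threshold that keeps $\mu-3L_F>0$.
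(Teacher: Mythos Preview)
Your argument for item~2 is exactly the paper's: the optimality condition of the prox gives $\mu(x_k-y_{k+1})\in\partial f(y_{k+1})$, hence the distance bound.

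For item~1, however, your main line of argument has a real gap. After the prox comparison
\[
f(y_{k+1})+\tfrac{\mu}{2}\|y_{k+1}-x_k\|^2 \le f(x_{k+1})+\tfrac{\mu}{2}\|x_{k+1}-x_k\|^2,
\]
you propose to invoke $L_F$-weak convexity in the form $f(x_{k+1})\ge f(y_{k+1})+\langle \mu(x_k-y_{k+1}),x_{k+1}-y_{k+1}\rangle-\tfrac{L_F}{2}\|x_{k+1}-y_{k+1}\|^2$ so that ``the $f(x_{k+1})$ terms cancel''. But both displayed inequalities are \emph{upper} bounds on $f(y_{k+1})-f(x_{k+1})$; neither provides the lower bound on $f(y_{k+1})-f(x_{k+1})$ (equivalently, an upper bound on $f(x_{k+1})-f(y_{k+1})$) that is needed for a cancellation. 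The detour through $\Psi_{M_k}(x_{k+1};x_k)$ and back to $f(x_{k+1})$ via \eqref{eq:lipsh_descn} only loosens the prox inequality and does not change this. So as written, the chain does not close, and no amount of polarization bookkeeping fixes the direction.

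The inequality you mention only in passing at the end, $\Psi_{M_k}(x_{k+1};x_k)\le \Psi_{M_k}(y_{k+1};x_k)$, is in fact the whole point, and the paper's proof uses nothing else. From \eqref{eq:alg_desc} one has $f(x_{k+1})\le \Psi_{M_k}(x_{k+1};x_k)$; by minimality over $\mathbf{C}$ this is $\le \Psi_{M_k}(y_{k+1};x_k)$; and by the other side of the Lipschitz-Jacobian bound, $\|F(x_k)+\nabla F(x_k)(y_{k+1}-x_k)\|\le f(y_{k+1})+\tfrac{L_F}{2}\|y_{k+1}-x_k\|^2$, so that
\[
f(x_{k+1}) \le f(y_{k+1}) + \tfrac{M_k+L_F}{2}\|y_{k+1}-x_k\|^2 \le f(y_{k+1}) + \tfrac{3L_F}{2}\|y_{k+1}-x_k\|^2,
\]
using $M_k\le 2L_F$. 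This is precisely the missing upper bound on $f(x_{k+1})-f(y_{k+1})$. Substituting it into the prox inequality cancels the $f$-values and gives $(\mu-3L_F)\|y_{k+1}-x_k\|^2\le \mu\|x_{k+1}-x_k\|^2$ in two lines. No weak convexity, no polarization identity, and no cross terms are needed for item~1.
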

\medskip 
\begin{proof}
Let us prove the first statement. Since $f$ is weakly convex, then $y_{k+1}$ is well-defined and unique. Thus:
\begin{align}\label{eq:desc_yk}
f(y_{k+1}) + \frac{\mu}{2}\|y_{k+1} - x_k\|^2\leq f(x_{k+1}) + \frac{\mu}{2}\|x_{k+1} - x_k\|^2.
\end{align} 
Further, from the definition of $x_{k+1}$, we have:
\begin{align*}
f(x_{k+1}) &\stackrel{\eqref{eq:alg_desc}}{\leq} \|F(x_k) \!+\! \nabla F(x_k) (x_{k+1} \!\!-\!\! x_k)\| + \frac{M_k}{2}\|x_{k+1} \!\!-\!\! x_k\|^2\\ \nonumber
&\stackrel{\eqref{eq:iter}}{\leq}\min_{x\in\mathbf{C}}\|F(x_k) + \nabla F(x_k) (x - x_k)\| + \frac{M_k}{2}\|x - x_k\|^2\\ 
&\stackrel{\red{\eqref{eq:alg_desc}}}{\leq} \min_{x\in\mathbf{C}} f(x) + \frac{M_k + L_F}{2}\|x - x_k\|^2\\ 
&\leq f(y_{k+1}) + \frac{M_k + L_F}{2}\|y_{k+1} - x_k\|^2,
\end{align*}
\red{where the last inequality follows by taking $x = y_{k+1}$}. Thus, we have:
\begin{align}\label{eq:01}
f(x_{k+1})&\leq f(y_{k+1}) + \frac{3 L_F}{2}\|y_{k+1} - x_k\|^2.
\end{align}
Finally, combining this inequality with \eqref{eq:desc_yk}, we get:
\begin{align*}
\|y_{k+1} - x_k\|^2 \leq \frac{\mu}{\mu -  3L_F}\|x_{k+1} - x_k\|^2,
\end{align*}
which proves the first statement. Further, from the optimality conditions of $y_{k+1}$, we get:
\begin{align}\label{eq:subg}
-\mu(y_{k+1} - x_k)\in\partial f(y_{k+1}). 
\end{align}
Thus, the second statement follows.  
\end{proof}
Using the strict descent and Lemma \ref{th:2}, we can conclude the following global convergence rate:
\medskip 
\begin{theorem}
Let the assumptions of Lemma \ref{th:2} hold. Then:
\begin{align*}
\min\limits_{j=1:k}\text{dist}(0,\partial f(y_j))\leq \mathcal{O}\left(\frac{1}{k^{1/2}}\right).
\end{align*}
Moreover, any limit point of the sequence $(x_k)_{k\geq 0}$ is a stationary point of problem \eqref{eq:op_prb_ours}.
\end{theorem}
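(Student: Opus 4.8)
The plan is to combine the two estimates of Lemma \ref{th:2} with the summability of the successive differences $\|x_{k+1}-x_k\|^2$ that comes from the descent inequality \eqref{eq:desc_fct}. From the second item of Lemma \ref{th:2} we have $\text{dist}(0,\partial f(y_{j+1}))^2 \leq \mu^2\|y_{j+1}-x_j\|^2$, and substituting the first item gives $\text{dist}(0,\partial f(y_{j+1}))^2 \leq \frac{\mu^3}{\mu - 3L_F}\|x_{j+1}-x_j\|^2$. Summing over $j=1,\dots,k$, telescoping the right-hand side of \eqref{eq:desc_fct}, and using the lower bound $f\geq f^*$ from Assumption \ref{eq:ass1}, one gets $\sum_{j=1}^{k}\text{dist}(0,\partial f(y_{j+1}))^2 \leq \frac{2\mu^3}{\delta(\mu-3L_F)}\bigl(f(x_0)-f^*\bigr) =: C < \infty$. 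Since a minimum never exceeds an average, $k\cdot\min_{j=1:k}\text{dist}(0,\partial f(y_j))^2 \leq C$ (up to a harmless shift of indices), which yields the stated $\mathcal{O}(k^{-1/2})$ rate.

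For the second claim, let $x^*$ be a limit point of $(x_k)_{k\geq 0}$ and pick a subsequence $x_{k_i}\to x^*$. Since $\mathbf{C}$ is closed and $x_k\in\mathbf{C}$ for all $k$, we have $x^*\in\mathbf{C}$. The descent lemma forces $\|x_{k+1}-x_k\|\to 0$, hence by the first item of Lemma \ref{th:2} also $\|y_{k+1}-x_k\|\to 0$, and therefore $y_{k_i+1}\to x^*$. On $\mathbf{C}$ the objective $f=\|F(\cdot)\|+I_{\mathbf{C}}$ coincides with the continuous map $x\mapsto\|F(x)\|$, so $f(y_{k_i+1})\to f(x^*)$. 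Finally, \eqref{eq:subg} gives $g_i := -\mu(y_{k_i+1}-x_{k_i})\in\partial f(y_{k_i+1})$ with $g_i\to 0$. Applying the closure process in the definition of the limiting subdifferential to the sequences $y_{k_i+1}\to x^*$, $f(y_{k_i+1})\to f(x^*)$ and $g_i\to 0$ yields $0\in\partial f(x^*)$, i.e., $x^*$ is a stationary point of \eqref{eq:op_prb_ours}. One may also note that $(f(x_k))_{k\geq 0}$ is nonincreasing and bounded below, hence convergent, and continuity of $f$ on $\mathbf{C}$ identifies its limit with $f(x^*)$.

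The algebraic combination of the inequalities is routine. The only step needing a bit of care is the justification of $f(y_{k_i+1})\to f(x^*)$, which is what makes the limiting-subdifferential closure applicable; this is exactly where the composite structure $f=\|F(\cdot)\|+I_{\mathbf{C}}$ and the continuity of $F$ are used, together with the fact that both $x^*$ and all $y_{k_i+1}$ lie in $\mathbf{C}$ so that the indicator term is identically zero along the relevant sequences.
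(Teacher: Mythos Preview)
Your argument is correct and follows essentially the same route as the paper: combine the two items of Lemma~\ref{th:2} to bound $\text{dist}(0,\partial f(y_{j+1}))^2$ by a multiple of $\|x_{j+1}-x_j\|^2$, telescope via \eqref{eq:desc_fct}, and take the minimum; then for stationarity use $\|y_{k+1}-x_k\|\to 0$ together with \eqref{eq:subg} and the closedness of the limiting subdifferential. Your justification that $f(y_{k_i+1})\to f(x^*)$ (via $y_{k_i+1}\in\mathbf{C}$ and continuity of $\|F(\cdot)\|$ on $\mathbf{C}$) is in fact more explicit than the paper's, but the overall approach is the same.
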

\begin{proof}
From \red{Lemma \ref{th:2}}, we have:
\begin{align*}
\text{dist}(0,f(y_{k+1}))^2&\leq \frac{\mu^3}{\mu - 3L_f}\|x_{k+1} - x_k\|^2.
\end{align*}
Further, combining this inequality with \eqref{eq:desc_fct}, we get:
\begin{align*}
\text{dist}(0,f(y_{k+1}))^2\leq \frac{6\mu^3}{\delta(\mu -  3L_F)} f(x_k) - f(x_{k+1}).
\end{align*}
Summing up this inequality and taking the minimum we get:
\begin{align*}
\min\limits_{j=0:k}\text{dist}(0,f(y_{j+1}))\leq \sqrt{\frac{6\mu^3}{\delta(\mu -  3L_F)}}\frac{1}{k^{1/2}}.
\end{align*}
which prove our first statement. Further, let $x^*$ be a limit point of $(x_k)_{k\geq 0}$, then one can notice that it is also a limit point of the sequence $(y_k)_{k\geq 0}$. This means that there exist a subsequence $(y_{k_j})_{j\geq 0}$ such that $ y_{k_j} \to x^*$. Since $F$ is continuous, then $f(y_{k_j}) \to f(x^*)$. Note that we have $\mu(y_{k_j } - x_{k_j - 1}) \in\partial f(y_{k_j})$ and $(y_{k_j} - x_{k_j - 1}) \to 0$. Then, we conclude from the definition of the generalized subgradient that $0\in\partial f(x^*)$ and hence $x^*$ is a stationary point.   
\end{proof}


\subsection{Better rates under KL}
\noindent In \cite{Nes:07}, the authors impose a nondegeneracy assumption on the Jacobian, that is, $\sigma_{\text{min}}(\nabla F(x)) >0$ for all $x$ in the level set of $\|F(x_0)\|$ in order to prove global convergence rate for MG-N method. Such a condition is  not always valid in practice. In this section, we derive improved convergence rates for MPG-N method provided that the objective function satisfies the KL property. In general, the KL condition is less conservative than the nondegeneracy condition (see Section \ref{sec:PFA_SIM}).  Let us denote the set of limit points of $(x_{k})_{k\geq 0}$ by:
\begin{align*}
\Omega (x_{0})=&\lbrace \bar{x}\in \mathbb{E}: \exists \text{ an increasing sequence of integers }\\
& (k_{t})_{t\geq0}, \text{ such that } x_{k_{t}}\to \bar{x} \text{ as } t\to \infty \rbrace.
\end{align*} 
We have the following convergence rate:

\medskip 

\begin{theorem}\label{Thm:kl}
Let the assumptions of Lemma \ref{th:2} hold. Additionally, assume that $f$ satisfy the KL property \eqref{eq:kl} on $\Omega(x_0)$. Then, the following convergence rates hold for the sequence $(x_{k})_{k\geq 0}$ generated by MPG-N algorithm in function values:
\begin{enumerate}
\item[$\bullet$]If $q\geq 2$, then $f(x_{k})$ converge to $f_{*}$ linearly for $k$ sufficiently large.
\item[$\bullet$]If $q < 2$, then $f(x_{k})$ converge to $f_{*}$ at sublinear rate of order $\mathcal{O}\left(\frac{1}{k^{\frac{q}{2 - q}}}\right)$ for $k$ sufficiently large.
\end{enumerate}
\end{theorem}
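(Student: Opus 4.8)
The plan is to fold the sufficient decrease inequality \eqref{eq:desc_fct}, the two bounds of Lemma \ref{th:2}, and the KL inequality \eqref{eq:kl} into a single one-step recursion for the residual $r_k := f(x_k) - f_*$, and then to convert that recursion into the two stated rates through a case analysis on the exponent $q$.

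\textbf{Setting up.} I would first let $f_* := \lim_{k\to\infty} f(x_k)$, which exists because $(f(x_k))_{k\geq 0}$ is nonincreasing and bounded below, and note $f(x_k) \geq f_*$ for all $k$ by monotonicity. If $f(x_{k_0}) = f_*$ for some finite $k_0$, then \eqref{eq:desc_fct} forces $x_{j+1} = x_j$ for all $j \geq k_0$ and the claim is trivial, so assume $r_k > 0$ for every $k$. Since $x_k \in \mathcal{L}(f(x_0))$ for all $k$, the set $\Omega(x_0)$ being compact (as required by the KL hypothesis) means $(x_k)_{k\geq 0}$ is bounded and $\text{dist}(x_k,\Omega(x_0)) \to 0$; combined with $\|x_{k+1}-x_k\|\to 0$ and statement (1) of Lemma \ref{th:2} this gives $\|y_{k+1}-x_k\| \leq \sqrt{\mu/(\mu-3L_F)}\,\|x_{k+1}-x_k\| \to 0$, hence $\text{dist}(y_{k+1},\Omega(x_0)) \to 0$. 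Continuity of $F$ along convergent subsequences of $(x_k)_{k\geq 0}$ then shows $f \equiv f_*$ on $\Omega(x_0)$, and $f(y_{k+1}) \leq f(x_k) \to f_*$ gives $f(y_{k+1}) < f_* + \epsilon$ for $k$ large, so the KL inequality \eqref{eq:kl} is applicable at $y_{k+1}$ for all large $k$, provided $f(y_{k+1}) > f_*$.

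\textbf{One-step recursion.} Fix $k$ large. If $f(y_{k+1}) \leq f_*$, then \eqref{eq:01} gives at once $r_{k+1} \leq \frac{3L_F\mu}{2(\mu-3L_F)}\|x_{k+1}-x_k\|^2$. Otherwise, applying \eqref{eq:kl} at $y_{k+1}$ and using statement (2) of Lemma \ref{th:2} ($\text{dist}(0,\partial f(y_{k+1})) \leq \mu\|y_{k+1}-x_k\|$), statement (1) of Lemma \ref{th:2}, and \eqref{eq:01} yields
\begin{align*}
r_{k+1} \;\leq\; \sigma_q\mu^q\Big(\frac{\mu}{\mu-3L_F}\Big)^{q/2}\|x_{k+1}-x_k\|^q \;+\; \frac{3L_F\mu}{2(\mu-3L_F)}\|x_{k+1}-x_k\|^2 .
\end{align*}
Because $\|x_{k+1}-x_k\|\to 0$ we have $\|x_{k+1}-x_k\| \leq 1$ for $k$ large, so the smaller of the powers $q$ and $2$ dominates; substituting $\|x_{k+1}-x_k\|^2 \leq \frac{2}{\delta}(r_k-r_{k+1})$ from \eqref{eq:desc_fct} then gives, merging the degenerate case, either $r_{k+1} \leq A\,(r_k-r_{k+1})$ with a constant $A>0$ when $q\geq 2$, or $r_{k+1}^{2/q} \leq B\,(r_k-r_{k+1})$ with a constant $B>0$ when $q<2$.

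\textbf{Extracting the rates.} When $q\geq 2$, rearranging $r_{k+1}\leq A(r_k-r_{k+1})$ gives $r_{k+1} \leq \frac{A}{1+A}r_k$ with $\frac{A}{1+A}\in(0,1)$, that is, linear convergence. When $q<2$, write $\beta = 2/q > 1$; the recursion $r_{k+1}^{\beta}\leq B(r_k-r_{k+1})$ for a nonnegative nonincreasing null sequence is the classical one, and is handled by examining the increments of $k\mapsto r_k^{1-\beta}$: convexity of $t\mapsto t^{1-\beta}$ together with a dichotomy on whether $r_{k+1}^{\beta}\geq \tfrac12 r_k^{\beta}$ shows $r_{k+1}^{1-\beta} - r_k^{1-\beta}$ is bounded below by a positive constant, whence $r_k^{1-\beta}$ grows at least linearly in $k$ and $r_k = \mathcal{O}(k^{-1/(\beta-1)}) = \mathcal{O}(k^{-q/(2-q)})$. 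I expect the main obstacle to be the bookkeeping of the second paragraph: the KL inequality must be invoked at the proximal surrogates $y_{k+1}$ rather than at $x_{k+1}$ (this is precisely why stationarity is measured at $y_{k+1}$ in Lemma \ref{th:2}), which requires boundedness of the iterates, $\text{dist}(x_k,\Omega(x_0))\to 0$, and a separate easy argument in the degenerate regime $f(y_{k+1})\leq f_*$; once the one-step recursion holds the rate extraction is routine.
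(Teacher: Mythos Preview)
Your argument is correct and follows essentially the same route as the paper: the same chain \eqref{eq:01} $\to$ KL at $y_{k+1}$ $\to$ Lemma~\ref{th:2}(2) $\to$ Lemma~\ref{th:2}(1) $\to$ \eqref{eq:desc_fct} produces the paper's two-term recursion $\delta_{k+1}\leq C_1(\delta_k-\delta_{k+1})^{q/2}+C_2(\delta_k-\delta_{k+1})$. The only substantive differences are presentational: the paper simply cites an external recursion lemma (Lemma~2 in \cite{NabNec:22}) to convert this into the two rates, whereas you first collapse the two terms into one via $\|x_{k+1}-x_k\|\leq 1$ and then extract the rates by hand; and you supply the bookkeeping (KL applicability at $y_{k+1}$, the degenerate case $f(y_{k+1})\leq f_*$) that the paper leaves implicit. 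One small caution: compactness of $\Omega(x_0)$ alone does not literally force $(x_k)$ to be bounded, so that sentence should be read as an additional standing assumption (the paper's proof is silent on this point as well).
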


\medskip 

\begin{proof}
\red{We have:}
\begin{align*}
f(x_{k+1}) - f_* &\stackrel{\red{\eqref{eq:01}}}{\leq} f(y_{k+1}) - f_* + \frac{3L_F}{2}\|y_{k+1} - x_k\|^2\\
            &\stackrel{\red{\eqref{eq:kl}}}{\leq} \sigma_q \red{S_f(y_{k+1})^q} +\frac{3L_F}{2}\|y_{k+1} - x_k\|^2\\
&\leq \sigma_q \mu^q \|y_{k+1} - x_k\|^q + \frac{3L_F}{2}\|y_{k+1} - x_k\|^2\\
&\leq \sigma_q \mu^q \left(\frac{\mu}{\mu -  3L_F}\right)^{q/2} \|x_{k+1} - x_k\|^q \\
&\quad + \frac{2\mu}{3L_F(\mu - 3L_F)}\|x_{k+1} - x_{k}\|^2\\
&\leq  C_1(f(x_k) - f(x_{k+1}))^\frac{q}{2} +C_2(f(x_{k}) - f(x_{k+1})).    
\end{align*}
where \red{the third and the fourth inequalities follow from Lemma \ref{th:2}, the last inequality follows from the descent \eqref{eq:desc_fct}}, $C_1 = \sigma_q \mu^q \left(\frac{\mu}{\mu -  3L_F}\right)^{q/2}(2/\delta)^{q/2}$ and $C_2 =  \frac{4\mu}{\delta(3L_F)(\mu - 3L_F)} $. Denote $\delta_k = f(x_{k}) - f_*$, then we get:
\begin{align*}
\delta_{k+1}\leq C_1(\delta_k - \delta_{k+1})^{\frac{q}{2}} + C_{2}(\delta_k - \delta_{k+1}).
\end{align*}
Using Lemma 2 in \cite{NabNec:22} with $\theta =\frac{2}{q}$ we get our statement.
\end{proof}

\section{Power flow analysis}\label{sec:PFA_SIM}
\noindent  Power flow problems are ones of the most studied in power systems being an important tool for planning and operation of the electric grid. In this section we consider the particular problem of power flow analysis. This is defined as follows.   Consider a power system with $N$ bus (see e.g., Figure \ref{fig:my_label} for the IEEE 14 bus system). We denote $v_i$, $p_i$ and $q_i$ the  complex voltage, active power and reactive power for the $i$ bus, respectively. Let $Y : = G + jB$ be the admittance matrix and denote $p=(p_1,\cdots,p_N)$, $q = (q_1,\cdots,q_N)$ and $v=(v_1,\cdots,v_N)$. Given a complex load vector $s: = s_R + js_I$, then the power flow analysis problem is to find $v=(v_1,\cdots,v_N)$ such that \cite{CoWeZh:20}:
\begin{align}\label{eq:objft}
  F(v) = s\;;\quad F(v) = p + jq = \text{diag}(vv^{H}Y^{H}), 
\end{align}
 where $(.)^{H}$ is the Hermitian transpose. This problem is equivalent to the following optimization problem:
 \begin{figure}
    \centering
    \includegraphics[scale=0.4]{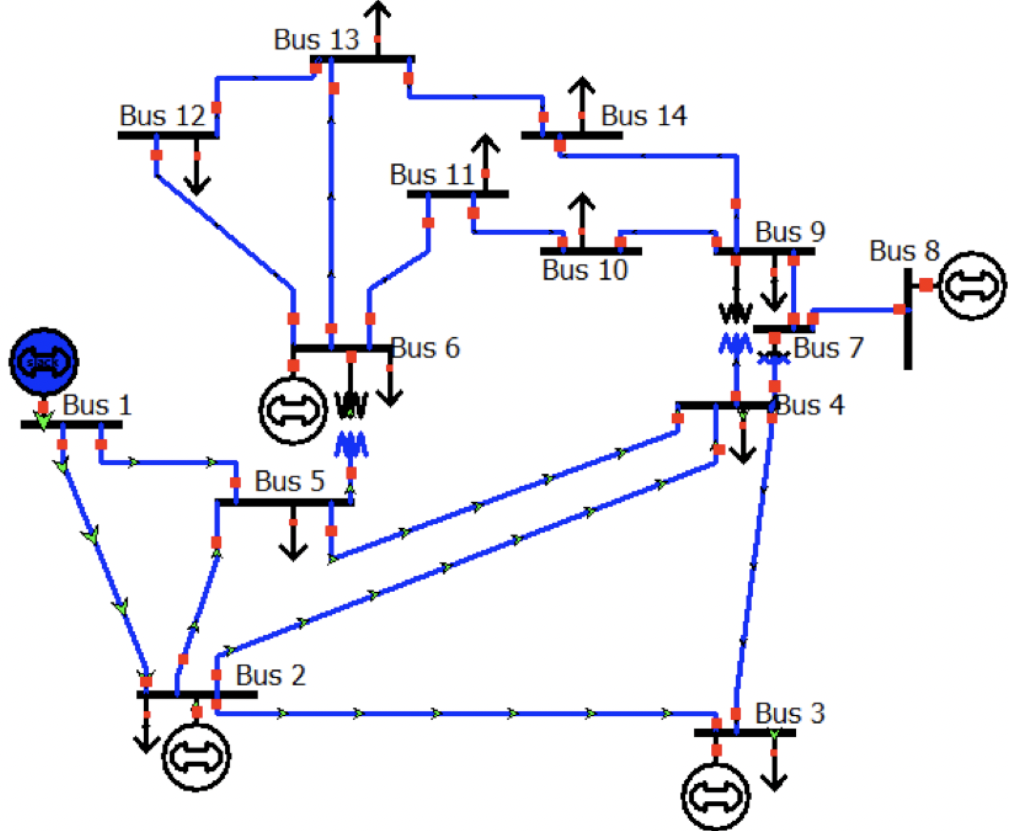}
    \caption{Representation  of the IEEE 14-bus system \cite{ICEG:13}.}
    \label{fig:my_label}
\end{figure}
\begin{align*}
&\min_{v =(u,\theta)} \|F(v) - s\|\\
&s.t. \quad u\in [u_{\text{min}},u_{\text{max}}],\quad \theta\in [-\pi,\pi].
\end{align*}
\noindent In \cite{CoWeZh:20}, the authors provide a similar formulation for the power flow analysis problem, but using $\| \cdot \|^2$ as the merit function to measure the distance between the objective function $F(\cdot)$ and the desired complex load $s$.  As we have mentioned earlier, it is beneficial to use only $\| \cdot\|$ as the merit function. Further, since we have (see e.g., \cite{LeN:97}): 
\begin{align*}
&p_i(u,\theta) = \sum_{k=1}^{N}u_i u_k\left(G(i,k) \text{cos}(\theta_i - \theta_k) + B(i,k) \text{sin}(\theta_i - \theta_k) \right),\\
&q_i(u,\theta) = -\sum_{k=1}^{N}u_i u_k(B(i,k) \text{cos}(\theta_i \!-\! \theta_k) \!+\! G(i,k) \text{sin}(\theta_i \!-\! \theta_k) ),
\end{align*}
and denote: 
$$\textbf{C} = \lbrace (u,\theta) : u\in [u_{\text{min}},u_{\text{max}}],\quad \theta\in [-\pi,\pi] \rbrace,$$
then, the previous optimization problem is equivalent to the following optimization problem:
\begin{align}\label{eq:opt_pr_f}
\min\limits_{x=(u;\theta)\in\textbf{C}} f(x) =
   \begin{Vmatrix}
       p(x) - s_R  \\ q(x) - s_I  
   \end{Vmatrix}.
\end{align}
The most efficient algorithm for solving the (unconstrained) power flow analysis problem is the Newton-Raphson (NR) method \cite{Sto:74}. However it may lead to poor performance when the initialization point is far from the optimum or the system is stressed (i.e., the problem is ill-conditioned). In a recent paper, \cite{CoWeZh:20}, the authors  proposed a hybrid method that combines stochastic gradient descent (SGD)  and the NR methods to overcome the numerical challenges in this problem. The iterative process  starts with the NR algorithm, and if the method detect a divergence (e.g., when the condition number of the Jacobian deteriorates), then switch to the SGD algorithm. After running a few SGD steps, then again switch to the NR iterates and repeat the process until an (approximate) optimal solution is found. Since this hybrid algorithm cannot deal with (simple) constraints as in \eqref{eq:opt_pr_f}, we propose to use our new method,  modified projected Gauss-Newton (MPG-N), and compare its performance  with the projected gradient descent (PGD) method applied to the problem \eqref{eq:op-form1}, where $F$ is given in \eqref{eq:objft}.  In order to apply both methods, one needs to evaluate the gradient of the functions $p(x)$ and $q(x)$. We have the following expressions for the derivatives of $p_i$'s and $q_i$'s:
\begin{align*}
&\frac{\partial p_i}{\partial u_i} \!=\! 2G(i,i) + \!\!\sum_{\substack{k=1 \\ k\neq i}}^{N} u_k\left(G(i,k) \text{cos}(\theta_i \!\!-\!\! \theta_k) \!\!+\!\! B(i,k) \text{sin}(\theta_i \!\!-\!\! \theta_k) \right), \\
&\frac{\partial p_i}{\partial u_k} = u_i\left(G(i,k) \text{cos}(\theta_i \!-\! \theta_k) + B(i,k) \text{sin}(\theta_i \!-\! \theta_k) \right), \forall k \!\neq\! i,\\
&\frac{\partial p_i}{\partial \theta_i} = \sum_{\substack{k=1 \\ k\neq i}}^{N} u_ku_i\left(-G(i,k) \text{sin}(\theta_i \!-\! \theta_k) + B(i,k) \text{cos}(\theta_i \!-\! \theta_k) \right),  \\
&\frac{\partial p_i}{\partial \theta_k} = -u_i u_k\left(-B(i,k) \text{cos}(\theta_i \!\!-\!\! \theta_k) \!\!-\!\! G(i,k) \text{sin}(\theta_i \!\!-\!\! \theta_k)  \right),\! \forall k\neq \!i,\\
&\frac{\partial q_i}{\partial u_i} \!=\!\!-\!2B(i,i) \!\!-\!\!\!\sum_{\substack{k=1 \\ k\neq i}}^{N} u_k\left(B(i,k) \text{cos}(\theta_i \!\!-\!\! \theta_k) \!\!-\!\! G(i,k) \text{sin}(\theta_i \!\!-\!\! \theta_k) \right), \\
&\frac{\partial q_i}{\partial u_k} = -u_i\left(B(i,k)\text{cos}(\theta_i \!-\! \theta_k) - G(i,k)\text{sin}(\theta_i \!-\! \theta_k)\right), \forall k\!\neq\! i,\\
&\frac{\partial q_i}{\partial \theta_i} = \sum_{\substack{k=1 \\ k\neq i}}^{N} u_k u_i\left( B(i,k) \text{sin}(\theta_i \!-\! \theta_k) + G(i,k) \text{cos}(\theta_i \!-\! \theta_k)  \right),   \\
&\frac{\partial q_i}{\partial \theta_k} = -u_k u_i\left( G(i,k) \text{cos}(\theta_i \!-\! \theta_k) + B(i,k) \text{sin}(\theta_i \!-\! \theta_k)  \right),\forall k\!\neq\! i.
\end{align*}
Hence, $\nabla f(x) \in\mathbb{R}^{2N}$ and we have:
\begin{align*}
\nabla f(x) = \sum_{i=1}^{N} \frac{\partial p_i(x)}{\partial x}(p_i(x) - s_R) + \frac{\partial q_i(x)}{\partial x} (q_i(x) - s_I),
\end{align*}
where $\frac{\partial p_i(x)}{\partial x} = \left(\frac{\partial p_i(x)}{\partial u_1};\cdots;\frac{\partial p_i(x)}{\partial u_N};\frac{\partial p_i(x)}{\partial \theta_1};\cdots;\frac{\partial p_i(x)}{\partial \theta_N}\right)$ and $\frac{\partial q_i(x)}{\partial x} = \left(\frac{\partial q_i(x)}{\partial u_1};\cdots;\frac{\partial q_i(x)}{\partial u_N};\frac{\partial q_i(x)}{\partial \theta_1};\cdots;\frac{\partial q_i(x)}{\partial \theta_N}\right)$ for $i=1:N$. Note that the Jacobian $\nabla F$ may be ill-conditioned, but the objective function $f$ (\red{may}) satisfy KL inequality. 

\subsection{Numerical simulations}
 \noindent In this subsection, we demonstrate the efficiency of the modified projected Gauss-Newton (MPG-N) method using several IEEE bus test cases from \cite{ICEG:13} (IEEE 14 bus, IEEE 39 bus, IEEE 57 bus and IEEE 118 bus). We chose an optimal point $x^* \in \textbf{C}$, then we generate $s_R = p(x^*)$ and $s_I = q(x^*)$ (see also \cite{CoWeZh:20}). We apply MPG-N method on problem \eqref{eq:opt_pr_f} and PGD method on  problem \eqref{eq:op-form1}, where $F$ is given in \eqref{eq:objft}, and test whether the algorithms can reach $x^*$ from a random feasible starting point. The stopping criterion for both algorithms is $\|F(x_k)\|\leq 10^{-3}$. The results are given in Figure \eqref{fig:my_label1}, where we plot the evolution of the function value $\|F(x_k)\|$ along iterations. From this figure one can observe that in the beginning, the PGD performs better than the MPG-N method. However, MPG-N method requires small number of iterations (even 5 times less) than the PGD in order to achieving the desired accuracy.
\begin{figure}
    \centering
    \includegraphics[height = 2.9cm]{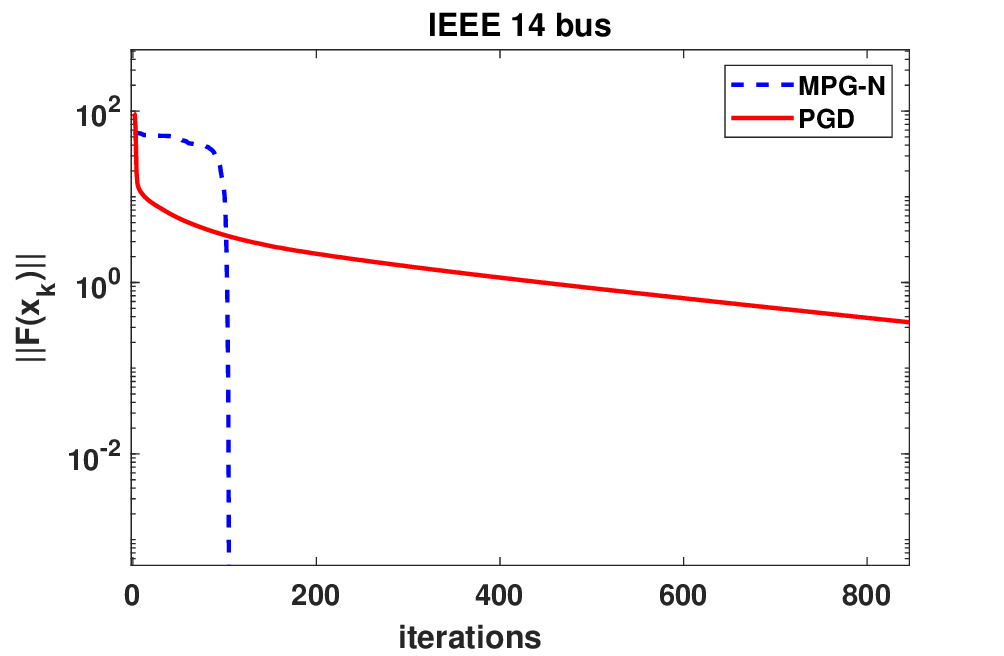}
    \includegraphics[height = 3cm]{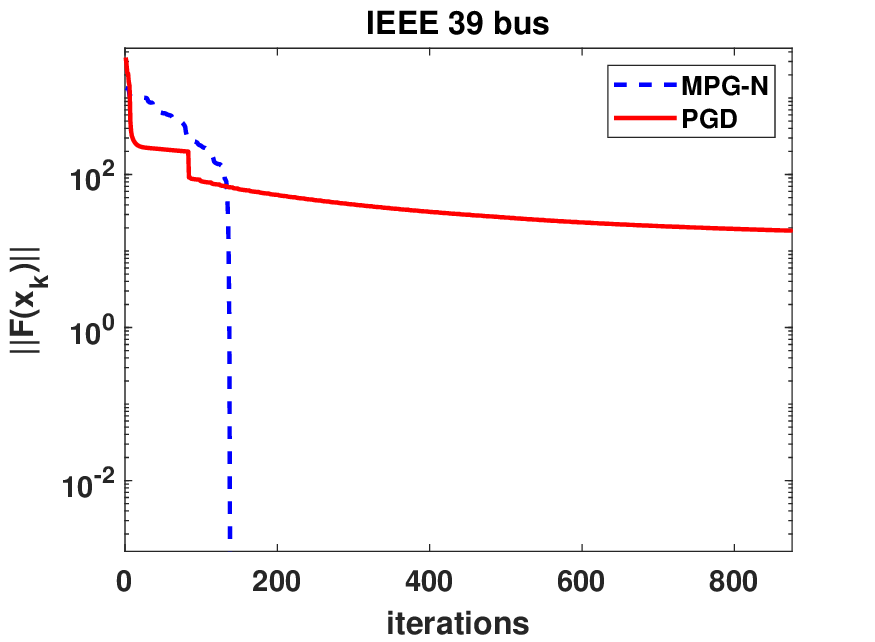}
    \includegraphics[height = 3.1cm]{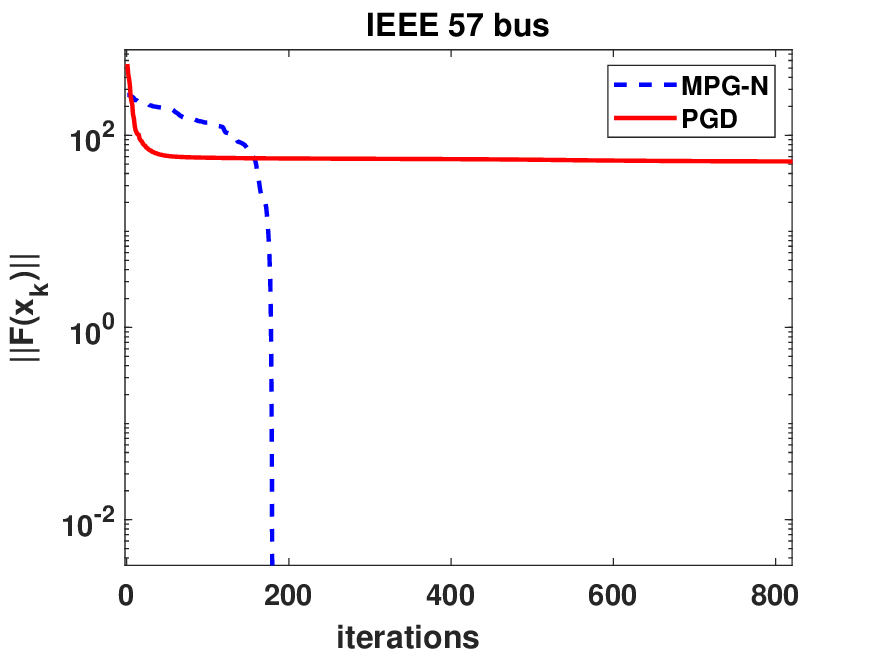}
    \includegraphics[height = 3.1cm]{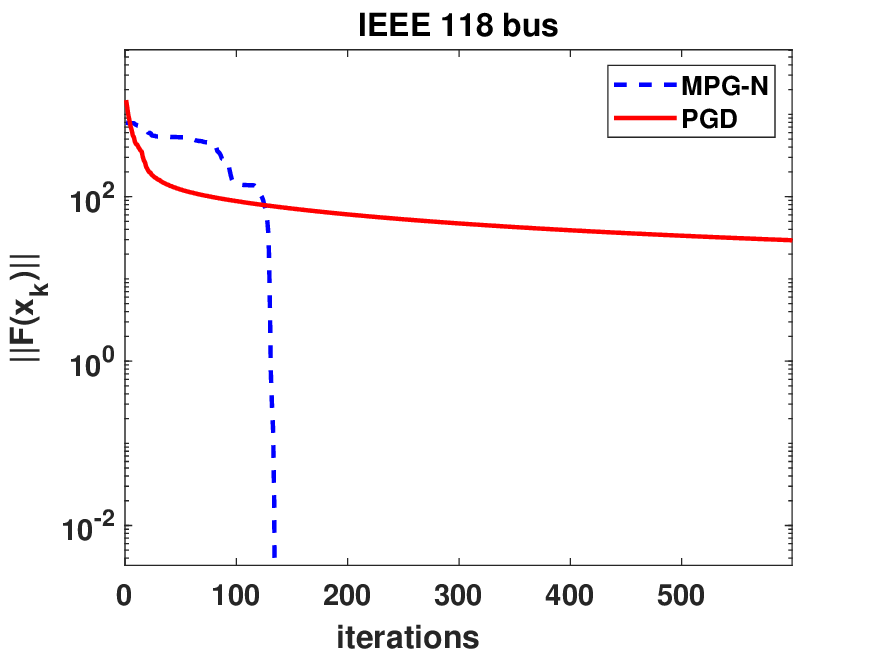}
    \caption{Comparison between MPG-N and PGD methods in terms of $\|F(x)\|$ along iterations on several IEEE bus systems.}
    \label{fig:my_label1}
\end{figure}

\section{Conclusion}
\noindent In this paper, we have proposed  a modified projected Gauss-Newton (MPG-N) method for solving constrained least-squares problems.  Under mild assumptions, we have proved global convergence results for the iterates. More precisely, we have proved that any limit point of the sequence generated by MPG-N algorithm is a stationary point and under the KL property, we have derived convergence rates in function values depending on the KL parameter. Finally, we have considered solving a power flow problem and compared the performance of our scheme with the projected gradient method, showing the efficiency of the proposed method on several IEEE bus test cases.
\vspace{1cm}

%

\section*{ACKNOWLEDGMENT}
\noindent The research leading to these results has received funding from:  ITN-ETN project TraDE-OPT funded by the European Union’s Horizon 2020 Research and Innovation Programme under the Marie Skolodowska-Curie grant agreement No. 861137;  NO Grants 2014-2021, RO-NO-2019-0184, under project ELO-Hyp, contract no. 24/2020;  UEFISCDI PN-III-P4-PCE-2021-0720, under project L2O-MOC, nr. 70/2022.


\end{document}